\newtheorem{conj}{Conjecture}[section]
\newtheorem{thm}{Theorem}[section]
\newtheorem{remark}[conj]{Remark}
\newtheorem{lem}[conj]{Lemma}
\newtheorem{prop}[conj]{Proposition}
\newtheorem{coro}[conj]{Corollary}
\newtheorem{defn}[conj]{Definition}
\newtheorem{cor}[conj]{Corollary}
\newcommand{\supp}{\mathrm{supp}}
\newcommand{\mR}{\mathbb{R}} 
\newcommand{\R}{\mathbb{R}} 
\newcommand{\C}{\mathbb{C}}
\newcommand{\mP}{\mathbb{P}}
\newcommand{\mE}{\mathbb{E}}
\newcommand{\re}{\mathrm{Re}\:}
\DeclareMathOperator{\Var}{Var}
\newcommand{\vol}{\mathrm{Vol}}
\newcommand{\h}{\widetilde{h}}
\renewcommand{\P}{\mathbb{P}}
\newcommand{\M}{\mathcal{M}}
\begin{document}

\title{Concentration of information content for convex measures\thanks{Some of  these results were announced at ISIT 2016 \cite{LFM16:isit}.}}
\author{Matthieu Fradelizi\thanks{LAMA, Univ Gustave Eiffel, UPEM, Univ Paris Est Cr\'eteil, CNRS, F-77447, Marne-la-Vall\'ee, France.
E-mail: matthieu.fradelizi@univ-eiffel.fr}, 
Jiange Li \thanks{Department of Mathematical Sciences, University of Delaware, 501 Ewing Hall, Newark, DE 19716, USA.
E-mail: lijiange@udel.edu}, 
Mokshay Madiman\thanks{Department of Mathematical Sciences, University of Delaware, 501 Ewing Hall, Newark, DE 19716, USA.
E-mail: madiman@udel.edu} 
}
\date{\today}
\maketitle

\begin{abstract}
We establish sharp exponential deviation estimates of the information content as well
as a sharp bound on the varentropy for the class of convex measures on Euclidean spaces.
This generalizes a similar development for log-concave measures in the recent work of Fradelizi, Madiman and Wang (2016).
In particular, our results imply that convex measures in high dimension are concentrated in an annulus
between two convex sets (as in the log-concave case) despite their possibly having much heavier tails.
Various tools and consequences are developed, including a sharp comparison result for R\'enyi entropies,
inequalities of Kahane-Khinchine type for convex measures that extend 
those of Koldobsky, Pajor and Yaskin (2008) for log-concave measures,
and an extension of Berwald's inequality (1947). 
\end{abstract}

\section{Introduction}

Let $X$ be a random vector in $\mR^n$. Suppose that the distribution of $X$ has 
density $f$ with respect to the Lebesgue measure on $\mR^n$. We say that  the random variable
\begin{align}
\widetilde{h}(X)=-\log f(X)
\end{align}
is the {\it information content} of $X$. (Throughout this paper, we denote by $\log$ the natural logarithm).
The average value of $\widetilde{h}(X)$ is known more commonly as the {\it entropy} $h(X)$. In other words, the entropy of $X$ is defined by
\begin{align}
h(X)=\mE\widetilde{h}(X)=-\int_{\mR^n}f(x)\log f(x)dx.
\end{align}

Because of the relevance of the information content in various areas such as information theory, probability and statistics, it is intrinsically interesting to understand its behavior. 
In particular, it is natural to ask whether the information content concentrates around the entropy in high dimension. 
If $X$ is a standard Gaussian random vector in $\mR^n$, its information content is
$$
\widetilde{h}(X)=\frac{|X|^2}{2}+\frac{n}{2}\log(2\pi),
$$
where $|\cdot|$ is the Euclidean norm. In this case, the concentration property of $\widetilde{h}(X)$
(or, equivalently, of $|X|^2$) around its mean is easy to show; the explicit computation was done, for example,
by Cover and Pombra \cite{CP89}, who were motivated by applications in communication theory. 
The first significant generalization beyond the Gaussian case was established by 
Bobkov and Madiman \cite{BM11:aop}, who showed that $\widetilde{h}(X)$ possesses a powerful concentration property 
if $X$ has a log-concave density. Specifically, they showed that there is a universal constant $c>1/16$ such that
for every random vector $X$ drawn from a log-concave density on $\mR^n$,
\begin{equation}\label{eq:bm-log-concave}
\P\left(\big|\h(X)-h(X)\big|\geq nt\right) \leq 2\,e^{-ct\sqrt{n}} .
\end{equation}
The proof of \cite{BM11:aop} heavily depends on the localization lemma of 
Lov\'asz-Simonovits \cite{LS93} and reverse H\"{o}lder-type inequalities \cite{Bor73a}. 
Fradelizi, Madiman and Wang \cite{FMW16} both improved this inequality (making
it sharp in a certain sense) and significantly simplified the proof, eliminating the use
of the localization lemma and instead only using the well known Pr\'ekopa-Leindler inequality
together with a new bootstrapping tool that they developed to deduce concentration bounds 
from certain uniform variance bounds.
Various applications have been found in recent years of this ``concentration of information'' phenomenon for log-concave measures 
(see, e.g., \cite{BM11:cras, BM12:allerton, BM12:jfa, BM13:goetze, MM14, MKC15:isit, DGP18, LG17, Kos17, MW17:witmse, MW19, MMX17:0, MK18, LMNPT18, MNT18}).

In this note, we extend the concentration property of the information content from log-concave measures to the significantly more 
general class of convex measures, which can have arbitrarily heavy tails. More precisely, for $s\in(-1/n, 0]$, we show that for every random vector $X$ drawn from a $s$-concave density on $\mR^n$ (see Section \ref{sec:moments} for definitions),
\begin{equation}
\mP\left(\widetilde{h}(X)-h(X)>nt\right)  \leq \exp\left(-\frac{n(1+ns)^2}{15}\min\{t, (1+ns)t^2\}\right) \label{eq:tail+}
\end{equation}
\begin{equation}
\mP\left(\widetilde{h}(X)-h(X)<-nt\right)\leq \exp\left(-\frac{n(1+ns)^3}{2}t^2\right).
\end{equation}
The limiting case $s=0$ recovers and strengthens the deviation estimate \eqref{eq:bm-log-concave}. Somewhat surprisingly, the functional that concentrates stays the same
and the only change is in the probability bounds--until this work, it was not clear whether one would have to consider functionals
of the density other than the information content in order to get meaningful concentration. Less surprisingly, the bounds (when appropriately
normalized) are no longer dimension-free as in the case of log-concave measures; this is tied to the folklore fact that one cannot have
dimension-free concentration in the absence of exponential tails. 

Various interesting consequences follow from our sharp exponential deviation estimate of the information content. For example, we have the following sharp comparison result for R\'enyi entropies of log-concave densities. Let $f$  be a log-concave density on $\mR^n$ and let $\phi$ be the one-sided exponential density on $\mR^n$, i.e., $\phi(x)=e^{-\sum_{i=1}^n x_i}$ when each $x_i>0$, and $\phi(x)=0$ otherwise. For $0<p<q\leq \infty$, we have
\begin{equation}
h_p(f)-h_q(f)\leq h_p(\phi)-h_q(\phi).
\end{equation}
Here, $h_p(f)$ is the R\'enyi entropy of order $p$ of the density $f$ (see Section \ref{sec:conseq} for definitions). This result was obtained in unpublished work of Madiman and Wang \cite{MW19}, 
but the role of exponential densities as maximizers is first explicitly stated here. Extensions of this result to the class of $s$-concave densities are given in Corollary \ref{cor:inf-h}. We also recover a result of Borell \cite{Bor73b} (see, p. 435, also Theorem 5.1 of \cite{GZ98}), which generalizes the classical theorem of Berwald \cite{Ber47}, see also \cite{MP89}, which was restricted to the range $p>0$. This is our Corollary \ref{cor:monot-f-K}, which states that for any concave function $f:K\to[0,\infty)$ on a convex body $K$ of $\mR^n$, the function 
$$
p\mapsto  \left( {n+p \choose n}\frac{1}{|K|}\int_K f(x)^pdx \right)^{1/p}
$$
is non-increasing on $(-1,\infty)$.

The rest of this note is organized as follows.
In Section~\ref{sec:gen}, we will show that exponential deviation of a functional follows from the log-concavity of normalized Laplace transform 
of that functional. In particular, we study the log-concavity of normalized moments of $s$-concave functions 
in Section~\ref{sec:moments}, and present a sharp result that unifies and extends results of
\cite{Bor73a, BM11:it, FMW16, BGG18, Ngu14:1}. 
Optimal concentration bounds and a sharp variance bound of the information content of 
$\kappa$-concave random vectors are established in Section~\ref{sec:opt} by combining the results
of Sections~\ref{sec:gen} and \ref{sec:moments}. These optimal bounds are put into
a more usable form in Section~\ref{sec:usable}, elucidating in particular the dependence of the bounds on dimension. In Section~\ref{sec:comp}, we examine a related monotonicity property of normalized moments of $s$-concave functions,
giving in particular an extension of a result of \cite{KPY08}.
Finally, various consequences of the main results of this note are discussed in Section~\ref{sec:conseq}.

\section{A general principle for exponential deviation}
\label{sec:gen}
 
A classical tool for establishing exponential deviation bounds of a random variable is the Cram\'er-Chernoff method, which relies on the control of Laplace transform of that random variable. In this section, we show that this could follow from the log-concavity of normalized Laplace transform, which is equivalent to establishing uniform variance bounds for tilted random variables. This was first developed by Fradelizi, Madiman and Wang \cite{FMW16} and is set in a more general framework here.

Let $X$ be a random vector in $\mR^n$. Let $\varphi: \mR^n \to\mR$ be a real-valued function. The {\it logarithmic Laplace transform} of $\varphi(X)$ is defined as
\begin{align}
L(\alpha)=\log\mE e^{\alpha\varphi(X)}.
\end{align}
The following observation is a well known fact about exponential families in statistics. 

\begin{lem} \label{lem:exp-family}
Suppose that $L(\alpha)<\infty$ for $\alpha\in(-a, b)$, where $a, b>0$ are certain real numbers. Then we have
$$
L'(\alpha)=\mE\varphi(X_\alpha), \ L''(\alpha)=\Var(\varphi(X_\alpha)),
$$
where $X_\alpha$ is the tilted random vector with density
$$
X_\alpha\sim \frac{e^{\alpha\varphi(x)}f(x)}{\int_{\mR^n}e^{\alpha\varphi(y)}f(y)dy}.
$$
In particular, we have $L'(0)=\mE\varphi(X)$ and $L''(0)=\Var(\varphi(X))$.
\end{lem}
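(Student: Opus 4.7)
The plan is to prove the lemma by direct differentiation under the integral sign applied to $M(\alpha) := \mE e^{\alpha\varphi(X)} = \int_{\mR^n} e^{\alpha\varphi(x)} f(x) dx$, after which the identities $L'(\alpha)=M'(\alpha)/M(\alpha)$ and $L''(\alpha)=M''(\alpha)/M(\alpha)-(M'(\alpha)/M(\alpha))^2$ immediately give the claimed formulas.

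First I would fix an arbitrary $\alpha_0\in(-a,b)$ and pick $\delta>0$ small enough that $[\alpha_0-2\delta,\alpha_0+2\delta]\subset(-a,b)$. The key analytic step is to produce an integrable dominating function for both $\partial_\alpha(e^{\alpha\varphi(x)}f(x))=\varphi(x)e^{\alpha\varphi(x)}f(x)$ and $\partial_\alpha^2(e^{\alpha\varphi(x)}f(x))=\varphi(x)^2 e^{\alpha\varphi(x)}f(x)$, uniformly for $\alpha\in[\alpha_0-\delta,\alpha_0+\delta]$. This uses the elementary inequality that for any $c>0$ there is a constant $K_c$ such that $|t|^k e^{\alpha t}\le K_c\bigl(e^{(\alpha-c)t}+e^{(\alpha+c)t}\bigr)$ for every $t\in\mR$ and $k\in\{1,2\}$, since $|t|^k$ is absorbed by any exponential. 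Applying this with $c=\delta$ and $\alpha\in[\alpha_0-\delta,\alpha_0+\delta]$ gives the pointwise bound
\begin{align}
|\varphi(x)|^k e^{\alpha\varphi(x)}f(x)\le K_\delta\bigl(e^{(\alpha_0-2\delta)\varphi(x)}+e^{(\alpha_0+2\delta)\varphi(x)}\bigr)f(x),
\end{align}
whose right-hand side is integrable precisely because $L(\alpha_0\pm 2\delta)<\infty$ by assumption.

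With the dominating function in hand, the dominated convergence theorem justifies exchanging differentiation and integration, yielding
\begin{align}
M'(\alpha)=\int_{\mR^n}\varphi(x)e^{\alpha\varphi(x)}f(x)dx,\qquad M''(\alpha)=\int_{\mR^n}\varphi(x)^2 e^{\alpha\varphi(x)}f(x)dx,
\end{align}
for $\alpha$ in a neighborhood of $\alpha_0$. Dividing by $M(\alpha)>0$ identifies these integrals with $\mE\varphi(X_\alpha)$ and $\mE\varphi(X_\alpha)^2$ under the tilted density $f_\alpha$. The chain rule on $L=\log M$ then gives $L'(\alpha)=\mE\varphi(X_\alpha)$ and $L''(\alpha)=\mE\varphi(X_\alpha)^2-(\mE\varphi(X_\alpha))^2=\Var(\varphi(X_\alpha))$. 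Specializing to $\alpha=0$ recovers the final assertion since $X_0=X$.

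The only non-routine step is the construction of the integrable envelope; everything else is calculus. This is the place one has to be careful, and the argument crucially exploits that $L$ is finite on an \emph{open} interval containing $\alpha_0$, so that one has a little room on both sides to dominate the polynomial factor $\varphi^k$ by nearby exponentials. Once that domination is established the rest of the proof is standard.
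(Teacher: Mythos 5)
Your proposal is correct and follows essentially the same route as the paper: both compute $L'=M'/M$ and $L''=M''/M-(M'/M)^2$ after differentiating under the integral sign, identifying the resulting ratios with the mean and variance of $\varphi$ under the tilted density $f_\alpha$. The only difference is that the paper simply asserts (as a standard fact about exponential families) that finiteness of $L$ on the open interval permits the interchange of differentiation and expectation, whereas you supply the dominating-function argument $|\varphi(x)|^k e^{\alpha\varphi(x)}\le K_\delta\bigl(e^{(\alpha_0-2\delta)\varphi(x)}+e^{(\alpha_0+2\delta)\varphi(x)}\bigr)$ justifying it, which is a valid and welcome filling-in of that detail.
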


\begin{proof}
The assumption $L(\alpha)<\infty$ for $\alpha\in(-a, b)$ guarantees that $L(\alpha)$ is infinitely differentiable with respect to $\alpha\in(-a, b)$ and that we can freely change the order of differentiation and expectation. Then we have
$$
L'(\alpha)=\frac{\int_{\mR^n} e^{\alpha\varphi(x)}f(x)\varphi(x)dx}{\int_{\mR^n} e^{\alpha\varphi(x)}f(x)dx}=\mE\varphi(X_\alpha).
$$
Differentiate $L'(\alpha)$ one more time. We have
$$
L''(\alpha)=\frac{\int_{\mR^n} e^{\alpha\varphi(x)}f(x)\varphi^2(x)dx}{\int_{\mR^n} e^{\alpha\varphi(x)}f(x)dx}-\left(\frac{\int_{\mR^n} e^{\alpha\varphi(x)}f(x)\varphi(x)dx}{\int_{\mR^n} e^{\alpha\varphi(x)}f(x)dx}\right)^2=\text{Var}(\varphi(X_\alpha)).
$$
\end{proof}

\begin{defn}\label{defn:log-concave}
A function $f: \mR^n\to [0, \infty)$ is called log-concave if we have
$$
f((1-\lambda)x+\lambda y)\geq f(x)^{1-\lambda}f(y)^\lambda
$$
for all $x,y\in\mR^n$ and all $\lambda\in[0,1]$.  
\end{defn}

\begin{lem}\label{lem:mgf-bound}
Suppose that $L(\alpha)<\infty$ for $\alpha\in(-a, b)$, where $a, b>0$ are certain real numbers. Let $c(\alpha)$ be a smooth function such that $e^{-c(\alpha)}\mE e^{\alpha\varphi(X)}$ is log-concave for $\alpha\in(-a, b)$. Then, for $\alpha\in(-a, b)$, we have
$$
\mE e^{\alpha(\varphi(X)-\mE\varphi(X))}\leq e^{\psi(\alpha)},
$$
where $\psi(\alpha)=c(\alpha)-c(0)-c'(0)\alpha$.
\end{lem}

\begin{proof}
Since $e^{-c(s)}\mE e^{s\varphi(X)}$ is log-concave, we have $L''(s)\leq c''(s)$. For any $0<t<\alpha< b$, integrating the previous inequality over $(0, t)$ we have
$$
L'(t)-L'(0)\leq c'(t)-c'(0).
$$
Integrating both sides of this inequality over $(0, \alpha)$, we have
\begin{align}\label{eq:log-moment}
L(\alpha)-L(0)-L'(0)\alpha\leq c(\alpha)-c(0)-c'(0)\alpha.
\end{align}
Similarly, one can show that the estimate also holds for $-a<\alpha<0$. Notice that $L(0)=0$ and $L'(0)=\mE \varphi(X)$. Then the lemma follows from exponentiating both sides of \eqref{eq:log-moment}.
\end{proof}

\begin{remark}
From Lemmas \ref{lem:exp-family} and \ref{lem:mgf-bound}, we can see that the variance bound of $\varphi(X_\alpha)$ and the normalizing function for the Laplace transform $\mE e^{\alpha\varphi(X)}$ to be log-concave can be obtained from each other by differentiating or integrating twice. 
\end{remark}

Now, we can apply the Cram\'er-Chernoff argument to establish exponential deviation inequalities. First, we introduce some notations. Let $f: \mR\to \mR\cup\{\infty\}$ be an extended real-valued function. The {\it Legendre transform} $f^*$ is defined as
\begin{align}
f^*(x)=\sup_{y\in\mR}(xy-f(y)), \ x\in\mR.
\end{align}
Define $f_+(x)=f(x){\bf 1}_{(0, \infty)}(x)$ and $f_-(x)=f(x){\bf1}_{(-\infty, 0)}(x)$, i.e., the restrictions of $f$ on the positive and negative half axes, respectively. 

\begin{coro}
Under assumptions and notations of Lemma \ref{lem:mgf-bound},  for any $t>0$, we have 
$$
\mP(\varphi(X)-\mE\varphi(X)>t) \leq e^{-\psi_+^*(t)}
$$
$$
\mP(\varphi(X)-\mE\varphi(X)<-t) \leq e^{-\psi_-^*(-t)},
$$
where $\psi_+^*$ and $\psi_-^*$ are Legendre transforms of $\psi_+$ and $\psi_-$, respectively.
\end{coro}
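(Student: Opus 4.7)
The strategy is the textbook Chernoff--Cram\'er method: combine the exponential Markov inequality with the moment generating function bound from Lemma \ref{lem:mgf-bound}, then optimize the free parameter to produce the Fenchel--Legendre dual.

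For the upper tail, I would fix $\alpha\in(0,b)$ and apply Markov's inequality to the non-negative random variable $e^{\alpha(\varphi(X)-\mE\varphi(X))}$, obtaining
$$
\mP(\varphi(X)-\mE\varphi(X)>t)\leq e^{-\alpha t}\,\mE e^{\alpha(\varphi(X)-\mE\varphi(X))}\leq e^{-(\alpha t-\psi_c(\alpha))},
$$
where the second step is exactly Lemma \ref{lem:mgf-bound}. Taking the infimum over $\alpha\in(0,b)$ transforms the exponent into $-\sup_{\alpha\in(0,b)}(\alpha t-\psi_c(\alpha))$. With the standard convention that a function defined on a subinterval of $\mR$ is extended by $+\infty$ off its domain when forming the Fenchel--Legendre dual, this supremum equals $\psi_{c,+}^*(t)$, which gives the first inequality.

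For the lower tail I would run the symmetric argument: for $\alpha\in(0,a)$, apply Markov to $e^{-\alpha(\varphi(X)-\mE\varphi(X))}$ and again invoke Lemma \ref{lem:mgf-bound}, this time at $-\alpha\in(-a,0)$, to get
$$
\mP(\varphi(X)-\mE\varphi(X)<-t)\leq e^{-\alpha t+\psi_c(-\alpha)}.
$$
The change of variables $\beta=-\alpha\in(-a,0)$ rewrites the exponent as $\beta t+\psi_c(\beta)=-\bigl((-t)\beta-\psi_c(\beta)\bigr)$, and optimizing over $\beta\in(-a,0)$ identifies the resulting supremum with $\psi_{c,-}^*(-t)$ by the same extension-by-$+\infty$ convention.

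I do not anticipate any real obstacle; the argument is essentially mechanical. The one subtlety worth flagging is precisely the interpretation of $\psi_{c,+}^*$ and $\psi_{c,-}^*$: one must treat $\psi_{c,\pm}$ as $+\infty$ off their respective half-lines so that the supremum in the definition of the dual stays confined to the interval on which Lemma \ref{lem:mgf-bound} provided the bound. Without this restriction the infimum in the Chernoff step might wander to the wrong sign of $\alpha$ and the estimate would degrade.
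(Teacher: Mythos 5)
Your argument is exactly the paper's: the Chernoff--Cram\'er method, applying Markov's inequality to $e^{\alpha(\varphi(X)-\mE\varphi(X))}$, invoking Lemma \ref{lem:mgf-bound}, and optimizing over $\alpha\in(0,b)$ for the upper tail and (after the sign change) over $(-a,0)$ for the lower tail, with the duals $\psi_{c,\pm}^*$ arising from the restricted suprema. The proposal is correct and matches the paper's proof, with your remark about extending $\psi_{c,\pm}$ by $+\infty$ off their half-lines simply making explicit the convention the paper uses implicitly.
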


\begin{proof}
For the upper tail, we have for $0<\alpha<b$ and $t>0$ that
\begin{align*}
\mP(\varphi(X)-\mE(\varphi(X))>t) &= \mP\left(e^{\alpha(\varphi(X)-\mE\varphi(X))}>e^{\alpha t}\right)\\
&\leq e^{-\alpha t}\cdot\mE e^{\alpha(\varphi(X)-\mE\varphi(X))}\\
&\leq e^{-(\alpha t-\psi_+(\alpha))}.
\end{align*}
We use Lemma  \ref{lem:mgf-bound} in the second inequality. Then the upper tail estimate follows by taking the infimum of the right hand side over  $0<\alpha<b$. The lower tail estimate follows from the same argument for $-a<\alpha<0$.
\end{proof}

\section{Log-concavity of moments of $s$-concave functions}
\label{sec:moments}

This section is devoted to the log-concavity of (normalized) moments of $s$-concave functions. This, in conjunction with the results of the previous section, enables us to establish optimal concentration bounds of the information content for convex measures in the following section. 

\begin{defn}\label{defn:s-concave}
For $s\in\mR$, a function $f:\mR^n\to [0, \infty)$ is called $s$-concave if we have
\begin{align}\label{eq:s-concave}
f((1-\lambda)x+\lambda y)\ge \left((1-\lambda) f(x)^s+\lambda f(y)^s\right)^{1/s}
\end{align}
for all $x,y\in\mR^n$ such that $f(x)f(y)>0$ and for all $\lambda\in[0,1]$.
\end{defn}
 
For $s\in\{-\infty, 0, \infty\}$, the RHS of \eqref{eq:s-concave} is defined by taking limits. More precisely, it is equal to $\min\{f(x), f(y)\}$ for $s=-\infty$, $f(x)^{1-\lambda}f(y)^{\lambda}$ for $s=0$, and $\max\{f(x), f(y)\}$ for $s=\infty$. Jensen's inequality implies that the class of $s$-concave functions shrinks with growing $s$. The case $s=0$ corresponds to the class of log-concave functions introduced in the previous section. For $s>0$, the above definition is equivalent to that $f^s$ is concave on its support; while for $s<0$, it is equivalent to that $f^s$ is convex on its support. 

The main result of this section is the following theorem, which unifies and extends previous results of various people. The case $s>0$ was proved by Borell \cite{Bor73a}; the case $s=0$ was proved 
independently by Bolley, Gentil and Guillin  \cite{BGG18} and Fradelizi, Madiman and Wang \cite{FMW16};
it can also be proved by taking a limit of the case $s>0$. The case $s<0$ was proved by Bobkov and Madiman \cite{BM11:it},
except that the range was $p>(n+1)|s|$, and the details of the proof were omitted there because of space considerations. A weaker log-concavity statement was also obtained by Nguyen \cite{Ngu14:1}.

\begin{thm}\label{thm:moments-lebesgue}
Let $s\in\mR$. Let $f:\mR^n\to [0, \infty)$ be an integrable $s$-concave function. The function defined as 
\begin{align*}
\Phi_f(p)=(p+s)\cdots(p+ns)\int_{\mR^n}f(x)^pdx, \ p>0
\end{align*}
and $\Phi_f(0)=n! s^n\vol(\{x\in\mR^n: f(x)>0\})$ is log-concave for $p>\max(0,-ns)$. Moreover, the statement  is sharp in the sense that there exist $s$-concave functions $f$ such that $\Phi_f(p)$ is log-affine.
\end{thm}

By a standard level set argument, Theorem \ref{thm:moments-lebesgue} can be reduced to the log-concavity of moments of $s$-concave functions on the real line. The corresponding result is provided in Proposition \ref{prop:tp-1} below. Recall that the gamma function $\Gamma(x)$ is defined as 
$$
\Gamma(x)=\int_0^{\infty} t^{x-1}e^{-t}dt, \ x>0
$$ 
and the beta function $B(x, y)$ is defined as
$$
B(x ,y) = \int_{0}^{1} t^{x-1} (1- t)^{y-1} dt, \ x,y>0.
$$
We define the following quantity
\begin{align}\label{eq:c-s-p}
C_s(p)=
\left\{  \begin{array}{ll}
B(p,s^{-1}+1)^{-1}  &\text{for}\ s>0\\
 \Gamma(p)^{-1}    &\text{for}\ s=0\\
B(p,-s^{-1}-p)^{-1}   &\text{for}\ s<0.\\  
\end{array} \right.
\end{align}

\begin{prop}\label{prop:tp-1}
Let $s\in\mR$. Let $\varphi: [0,\infty)\rightarrow[0,\infty)$ be an integrable $s$-concave function. Then the function
$$
\Psi_\varphi^s(p)=C_s(p)\int_0^{\infty}t^{p-1}\varphi(t) dt, \ p>0
$$
and $\Psi_\varphi^s(0)=\varphi(0)$ is log-concave for $p\in [0, \infty)$ if $s\geq 0$, and for $p\in[0, -1/s)$ if $s<0$. 
\end{prop}

The case $s>0$ was proved by Borell \cite{Bor73a} except that $\varphi$ is assumed to be decreasing; it was then noticed by some people and available for example in Gu\'edon, Nayar and Tkocz \cite{GNT14} that the result remains true without the monotonicity hypothesis. The case $s<0$ was proved by Fradelizi, Gu\'edon and Pajor \cite{FGP14}, and the case $s=0$ follows by taking a limit. 


\begin{proof}[Proof of Theorem \ref{thm:moments-lebesgue}]


As mentioned before, Theorem \ref{thm:moments-lebesgue} has been proved in the case $s\geq 0$ and the case $s<0$ in a smaller range $p>(n+1)|s|$. We present below a complete proof of the statement in the case $s<0$ for $p>n|s|$. 

\textbf{Log-concavity}. By the change of function $f\to f^{|s|}$, it suffices to prove the statement for $s=-1$. In other words, for any integrable function $f:\R^n\to [0, \infty)$  such that $1/f$ is convex on $K$, the function 
$$\Phi_f(p)= (p-1)\cdots(p-n)\int_{K}f(x)^pdx$$ 
is log-concave on $(n,\infty)$. Denote $g=1/f$, which  is convex on $K$. 
As done by Bobkov and Madiman \cite{BM11:it}, we integrate on level sets and perform a change of variable
$$
\int_{K} f(x)^pdx=\int_{K} g(x)^{-p}dx=\int_0^{\infty}ps^{-p-1}\psi(s)ds=\int_0^{\infty} pt^{p-1}\psi(1/t)dt,
$$
where $\psi(s)=\vol(\{x\in K: g(x)\le s\})$. Using the Brunn-Minkowski theorem, we deduce that $\psi$ is $1/n$-concave, which is equivalent to that $\psi^{1/n}$ is concave. The perspective function of $\psi^{1/n}$ is the bi-variate function $t\psi^{1/n}(s/t)$ for $s, t>0$. We use the property that if a function is convex/concave then its perspective function is also convex/concave. Then it follows that the function $\varphi(t)=t^n\psi(1/t)$ is $1/n$-concave. Thus we get
$$
\int_{\mR^n} f(x)^{p}dx=p\int_0^{\infty} t^{p-n-1}\varphi(t)dt. 
$$
Since $\varphi$ is $1/n$-concave, from Proposition \ref{prop:tp-1} we deduce that 
$$
p\mapsto B(p-n,n+1)^{-1}\int_0^{\infty} t^{p-n-1}\varphi(t)dt
$$ 
is log-concave on $(n,\infty)$. Then we can conclude the proof of the log-concavity of $\Phi_f(p)$ using the identity 
$$
B(p-n,n+1)^{-1}=\frac{p(p-1)\cdots(p-n)}{n!}.
$$

\textbf{Sharpness}. We construct below $s$-concave functions $f$ such that $\Phi_f(p)$ is log-affine. Let $U: \mR^n\to [0, \infty]$ be a positively homogeneous convex function of degree one; that is, $U(tx)=tU(x)$ for all $x\in\mR^n$ and all $t>0$. We define
\begin{align}\label{eq:f-s-u}
f_{s, U}(x)=
\left\{
\begin{array}{ll}
(1-sU(x))_+^{1/s} &\text{for}\ s\neq 0\\
e^{-U(x)} &\text{for}\ s=0.\\
\end{array}
\right.
\end{align}
The sharpness of Theorem \ref{thm:moments-lebesgue} readily follows from the following identity.
\begin{equation}\label{eq:fsU-id}
\int_{\mR^n} f_{s, U}(x)^pdx=\frac{C_Un!}{(p+s)\cdots(p+ns)},
\end{equation}
where $C_U=\vol(\{x\in \mR^n: U(x)\leq 1\})$. We only show identity \eqref{eq:fsU-id} for $s>0$, and the case $s\leq 0$ can be verified in a similar manner.
We have
\begin{align}\label{eq:homogeneity}
\int_{\mR^n} f_{s, U}(x)^pdx &= p\int_0^1t^{p-1}\vol(\{x\in\mR^n: (1-sU(x))_+^{1/s}>t\})dt \notag\\
&= p\int_0^1t^{p-1}\vol\left(\bigg\{x\in\mR^n: U(x)<\frac{1-t^s}{s} \bigg\}\right) dt \notag\\
&= C_Up\int_0^1t^{p-1}\bigg(\frac{1-t^s}{s}\bigg)^ndt\\
&= C_Ups^{-n-1}B\bigg(\frac{p}{s}, n+1\bigg). \notag
\end{align}
In equation \eqref{eq:homogeneity}, we use the homogeneity of $U$ and properties of Lebesgue measure. Then identity \eqref{eq:fsU-id} follows from the fact that
$$
B\bigg(\frac{p}{s}, n+1\bigg)=\frac{n!}{p/s\left(p/s+1\right)\cdots\left(p/s+n\right)}=\frac{s^{n+1}n!}{p(p+s)\cdots(p+ns)}.
$$
\end{proof}


\begin{remark}\label{rmk:extr}
We can rewrite Theorem \ref{thm:moments-lebesgue} in the following equivalent form.  Let $s\in\mR$. Let $f:\mR^n\to [0, \infty)$ be an  integrable $s$-concave function. Then the function
$$
p\mapsto\frac{\int_{\mR^n} f(x)^pdx}{\int_{\mR^n} f_{s, U}(x)^pdx}
$$
is log-concave for $p>\max(0,-ns)$. Here, the function $f_{s, U}$ is defined as per \eqref{eq:f-s-u}. Note that any norm provides an example of a positively homogeneous (of degree one) convex function $U: \mR^n\to [0, \infty]$.
Particularly, simple examples would be either the Euclidean norm $U(x)=|x|$ or the $\ell_1$-norm $U(x)=\sum_{i=1}^n |x_i|$.
Furthermore, the same result holds if one replaces $f_{s,U}$ by $f_{s,U}{\bf 1}_{\mR_+^n}$.
This form is especially convenient when $U$ is the $\ell_1$-norm, so that the function of interest just 
depends on the sum of coordinates. Specifically, this choice would yield the extremizers
\begin{equation}\label{eq:extr}
 f_{s}(x)=\frac{1}{Z(s)}\bigg(1-s\sum_{i=1}^n x_i\bigg)_+^{1/s} {\bf 1}_{\mR_+^n}(x)
\end{equation} 
for $s\neq 0$, where $Z(s)=\prod_{i=1}^n(1+is)^{-1}$; and $f_{0}(x)=e^{-\sum_{i=1}^n x_i} {\bf 1}_{\mR_+^n}(x)$.
\end{remark}

\section{Optimal concentration bounds for information content}
\label{sec:opt}

We are now ready to establish sharp exponential deviation estimates of the information content for convex measures. 
The study of convex measures was initiated by Borell in the seminal papers \cite{Bor74, Bor75a}. 
Given two subsets $A, B\subseteq\mR^n$ and $0<\lambda<1$, we define the Minkowski sum
$$
(1-\lambda)A+\lambda B=\{(1-\lambda) x+\lambda y: x\in A, y\in B\}.
$$
\begin{defn}
Let $\kappa\in \mR\cup\{-\infty, \infty\}$. A finite Borel measure $\mu$ on $\mR^n$ is called $\kappa$-concave if we have
\begin{align}\label{eq:kappa-concave}
\mu((1-\lambda)A+\lambda B)\geq((1-\lambda)\mu(A)^\kappa+\lambda\mu(B)^\kappa)^{1/\kappa}
\end{align}
for all Borel sets $A, B\subseteq \mR^n$ such that $\mu(A)\mu(B)>0$ and for all $\lambda\in[0, 1]$.
\end{defn}

For $\kappa\in\{-\infty, 0, \infty\}$, this definition is interpreted by taking limits. More precisely,  the RHS of \eqref{eq:kappa-concave} is equal to 
$\min(\mu(A), \mu(B))$ for $\kappa=-\infty$; $\mu(A)^{1-\lambda}\mu(B)^{\lambda}$ for $\kappa=0$, and $\max(\mu(A), \mu(B))$ for $\kappa=\infty$. 
Jensen's inequality implies that the class of $\kappa$-concave measures shrinks with growing $\kappa$. For $\kappa=-\infty$, we obtain the largest class, 
whose members are called {\it convex} or {\it hyperbolic measures}. The case $\kappa=0$ corresponds to the class of {\it log-concave measures},
which includes important measures such as Gaussian and exponential measures. The measure $\mu$ is absolutely continuous with respect to 
the Lebesgue measure restricted to an affine subspace $H$ of dimension $0\le d\le n$ such that $\kappa\le 1/d$, 
and its density is $s$-concave with $s=\frac{\kappa}{1-d\kappa}$ (as per Definition \ref{defn:s-concave}); note that therefore  $s>-1/d$. 
In particular, if $\kappa=1/d$ then, up to a scaling factor, $\mu$ is the Lebesgue measure supported on a convex set of $H$.


A random vector $X$ in $\mR^n$ is called $\kappa$-concave if the distribution of $X$ is $\kappa$-concave. 
Suppose that $X$ is a $\kappa$-concave random vector in $\mR^n$ with density $f$. (From the previous 
paragraph, it is necessary that $\kappa\leq 1/n$). Recall that the information content of $X$ is defined as $\widetilde{h}(X)=-\log f(X)$. 
The Laplace transform of $\widetilde{h}(X)$ is
$$
\mE f^{-\alpha}(X)=\int_{\mR^n} f(x)^{1-\alpha}dx.
$$
The integral is finite as long as $\alpha<1+\min\{s, ns\}$. The following statement readily follows from Theorem \ref{thm:moments-lebesgue} with $p$ replaced by $1-\alpha$.

\begin{prop} \label{prop:log-concave}
Let $s\in(-1/n, \infty]$. Let $X$ be a random vector in $\mR^n$ with density $f$ being $s$-concave. Then the function 
\begin{align}\label{eq:function}
\alpha\mapsto\prod_{i=1}^n(1-\alpha+is)\mE f^{-\alpha}(X)
\end{align}
is log-concave for $\alpha<1+\min\{0, ns\}$.
\end{prop}

\begin{remark}\label{rmk:gap}
In the case $s>0$, the statement holds in an interval smaller than the range where the Laplace transform $\mE f^{-\alpha}(X)$ is finite, i.e., $\alpha<1+s$. We suspect that Proposition \ref{prop:log-concave} holds for a larger range $\alpha<1+\min\{s, ns\}$.
\end{remark}

Following Lemma \ref{lem:mgf-bound}, we set 
\begin{align}\label{eq:c(a)}
c(\alpha)=-\sum_{i=1}^n\log(1-\alpha+is).
\end{align}

\begin{coro}\label{cor:v-bound}
Under assumptions and notations of Proposition \ref{prop:log-concave}, we have   
\begin{align} \label{eq:varentropy-bound}
\Var(\widetilde{h}(X))\leq\sum_{i=1}^n(1+is)^{-2}.
\end{align}
\end{coro}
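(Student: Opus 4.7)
The plan is to apply the apparatus of Section 2 with $\varphi = -\log f$, so that $\varphi(X) = \widetilde{h}(X)$ and the logarithmic moment generating function becomes
$$L(\alpha) = \log \mE e^{-\alpha \log f(X)} = \log \int_{\mR^n} f(x)^{1-\alpha}\, dx.$$
By Proposition \ref{prop:log-concave}, the function $e^{-c(\alpha)} \mE e^{\alpha \varphi(X)}$ is log-concave on the interval $\alpha < 1 - n/\beta$, where $c(\alpha)$ is the normalizing function in \eqref{eq:c(a)}. In particular $0$ lies in the interior of this interval since $\beta > n$.

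Next I invoke Lemma \ref{lem:exp-family} to identify $L''(0) = \Var(\widetilde{h}(X))$, together with the key differential inequality $L''(\alpha) \leq c''(\alpha)$ that was already established inside the proof of Lemma \ref{lem:mgf-bound} (log-concavity of $e^{-c(\alpha)} \mE e^{\alpha\varphi(X)}$ is equivalent to $L'' \leq c''$). Evaluating at $\alpha = 0$ therefore gives
$$\Var(\widetilde{h}(X)) = L''(0) \leq c''(0).$$

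It remains to compute $c''(0)$ from \eqref{eq:c(a)}. Differentiating
$$c(\alpha) = -\sum_{i=1}^n \log\bigl((1-\alpha)\beta - i\bigr)$$
twice yields $c''(\alpha) = \sum_{i=1}^n \beta^2 \bigl((1-\alpha)\beta - i\bigr)^{-2}$, and setting $\alpha = 0$ gives precisely $c''(0) = \beta^2 \sum_{i=1}^n (\beta - i)^{-2}$, which is the claimed bound. There is no real obstacle here: once Proposition \ref{prop:log-concave} is in hand, the corollary reduces to a two-line differentiation, and the role of Lemma \ref{lem:exp-family} is just to convert the log-concavity statement into a variance bound at the origin.
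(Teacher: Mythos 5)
Your argument is correct and is essentially the paper's own proof: identify $\Var(\widetilde{h}(X))=L''(0)$ via Lemma \ref{lem:exp-family}, use Proposition \ref{prop:log-concave} to get $L''\leq c''$ on a neighborhood of $0$ (which exists since $\beta>n$), and compute $c''(0)=\beta^2\sum_{i=1}^n(\beta-i)^{-2}$ from \eqref{eq:c(a)}. No gaps worth noting.
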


\begin{proof}
By Lemma \ref{lem:exp-family}, we know that $\Var(\widetilde{h}(X_\alpha))=L''(\alpha)$, where the tilted random vector $X_{\alpha}$ has density proportional to $f^{1-\alpha}$ and $L(\alpha)=\log\mE f^{-\alpha}(X)$ is the logarithmic Laplace transform. By Proposition \ref{prop:log-concave}, we know that $L''(\alpha)\leq c''(\alpha)$, where $c(\alpha)$ is defined in \eqref{eq:c(a)}. Then the variance bound \eqref{eq:varentropy-bound} follows by differentiating $c(\alpha)$ twice and setting $\alpha=0$. 
\end{proof}

\begin{remark}
Equality of \eqref{eq:varentropy-bound} holds for a large class of densities of the form $(1-sU(x))_+^{1/s}$, where $U$ is a positively homogeneous convex function of degree 1, i.e., $U(tx)=tU(x)$ for all $x\in\mR^n$ and all $t>0$. In this case, the normalized Laplace transform in \eqref{eq:function} is log-affine, i.e., $L''(\alpha)= c''(\alpha)$. Hence, we have equality in the above variance bound.  Particularly, this class includes Pareto distributions given by \eqref{eq:extr}. 
\end{remark}

\begin{remark}
The limiting case $s=0$ of Corollary~\ref{cor:v-bound} recovers the sharp variance bound $\Var(\widetilde{h}(X))\leq n$ for
log-concave random vectors in $\mR^n$. A bound of this form $\Var(\widetilde{h}(X))\leq Cn$
for an absolute constant $C$ was first obtained by Bobkov and Madiman \cite{BM11:aop} as a consequence
of their concentration result of $\widetilde{h}(X)$. Nguyen \cite{Ngu13:phd} and Wang \cite{Wan14:phd} independently determined that the sharp constant $C=1$. 
Other proofs of the sharp constant were independently given by Bolley, Gentil and Guillin \cite{BGG18} and Fradelizi, Madiman and Wang \cite{FMW16}.
The Fisher information $J(X)$ of a random vector $X$ in $\mR^n$ with density $f$ is defined as
 $$
J(X)=\int_{\mR^n}\frac{|\nabla f|^2}{f}dx.
$$
For isotropic log-concave random vectors $X$ (i.e., $\mE X=0$ and the covariance matrix $\Sigma=\mE [(X-\mE X)\otimes(X-\mE X)]$ is the identity matrix), a well known fact states that $J(X)\geq n$ and equality holds when $X$ is a standard Gaussian random vector. This, together with the sharp variance bound, yields that $\Var(\widetilde{h}(X))\leq J(X)$, which was observed by Nguyen \cite{Ngu14:1}. This relates the varentropy to the Fisher information, and its form may be compared to the logarithmic Sobolev inequality,
which relates the entropy to the Fisher information.
\end{remark}

The following result provides a control of Laplace transform. It readily follows from Lemma \ref{lem:mgf-bound} and Proposition \ref{prop:log-concave}. One can check that equality particularly holds for Pareto distributions defined in \eqref{eq:extr}. 

\begin{thm}\label{thm:main}
Let $s\in(-1/n, \infty)$. Let $X$ be a random vector in $\mR^n$ with density $f$ being $s$-concave. For $\alpha<1+\min\{0, ns\}$, we have
$$
\mE e^{\alpha(\widetilde{h}(X)-h(X))}\leq e^{\psi(\alpha)},
$$
where
\begin{align}\label{eq:psi(a)}
\psi(\alpha)=-\alpha\sum_{i=1}^n(1+is)^{-1}-\sum_{i=1}^n\log\frac{1-\alpha+is}{1+is}.
\end{align}
\end{thm}

\begin{remark} \label{rmk:gap-1}
Similar to the issue mentioned in Remark \ref{rmk:gap}, we can only control the Laplace transform in an interval smaller than the range where it is finite. We also suspect that Theorem \ref{thm:main} holds for a larger range $\alpha<1+\min\{s, ns\}$.
\end{remark}

Then we can apply the Cram\'er-Chernoff method to establish the following sharp concentration bounds on the information content for convex measures.

\begin{cor}\label{cor:devia-est}
Under assumptions and notations of Theorem \ref{thm:main}, we have for any $t>0$, 
$$
\mP\left(\widetilde{h}(X)-h(X)>t\right)\leq e^{-\psi_+^*(t)}
$$
$$
\mP\left(\widetilde{h}(X)-h(X)<-t\right)\leq e^{-\psi_-^*(-t)},
$$
where $\psi_+^*$ and $\psi_-^*$ are Legendre transforms of $\psi_+$ and $\psi_-$, respectively.
\end{cor}

\section{Usable bounds for concentration of information content}
\label{sec:usable}

Next, we put optimal bounds in the previous section into a more usable form. In the log-concave case, it is nearly straightforward to write Corollary \ref{cor:devia-est} in a more transparent way since explicit formulas of $\psi_+^*$ and $\psi_-^*$ are known (see \cite{FMW16}). In the general $s$-concave case, Legendre transforms $\psi_+^*$ and $\psi_-^*$ are implicit and it is nontrivial to derive from Corollary \ref{cor:devia-est} explicit bounds on tail probabilities of the information content. 

\begin{cor}\label{cor:bounds-s>0}
Let $s\in(-1/n, 0]$. Let $X$ be a random vector in $\mR^n$ with density $f$ being $s$-concave. For any $t>0$, we have
\begin{equation}
\mP\left(\widetilde{h}(X)-h(X)>nt\right)  \leq \exp\left(-\frac{n(1+ns)^2}{15}\min\{t, (1+ns)t^2\}\right) \label{eq:tail+}
\end{equation}
\begin{equation}
\mP\left(\widetilde{h}(X)-h(X)<-nt\right)\leq \exp\left(-\frac{n(1+ns)^3}{2}t^2\right). \label{eq:tail-}
\end{equation}
\end{cor}
\begin{proof}
The tail probability estimates \eqref{eq:tail+} and \eqref{eq:tail-} follow from lower bounds of Legendre transforms $\psi_+^*$ and $\psi_-^*$, respectively. We proceed the proof in two cases.

 \textbf{Upper tail}.   
Recall that the function $\psi$ is defined as per \eqref{eq:psi(a)}. Given $u>0$, as a function of $\alpha\in[0, 1+ns)$, one can verify that  
\begin{align}
(\alpha u-\psi(\alpha))'&=u+\sum_{i=1}^n(1+is)^{-1}-\sum_{i=1}^n(1-\alpha+is)^{-1} \label{eq:first-deri}\\
(\alpha u-\psi(\alpha))''&=-\sum_{i=1}^n(1-\alpha+is)^{-2}<0 \label{eq:second-deri}.
\end{align}
It is easy to see that $(\alpha u-\psi(\alpha))'(0)=u>0$ and $\lim_{\alpha\to1+ns}(\alpha u-\psi(\alpha))'=-\infty$. This, together with \eqref{eq:second-deri}, implies that there is an unique $\alpha^*\in(0, 1+ns)$ such that $(\alpha u-\psi(\alpha))'=0$. Moreover, $\alpha u-\psi(\alpha)$ is increasing on $[0, \alpha^*]$ and decreasing on $(\alpha^*, 1+ns)$.
Hence, we have 
\begin{align}\label{eq:sup+}
\psi_+^*(u) = \alpha^* u-\psi(\alpha^*).
\end{align}
We give below an estimate of $\alpha^*$, which will yield a lower bound on  $\psi_+^*(u)$. One can check that
$$
(\alpha u-\psi(\alpha))'''=-2\sum_{i=1}^n(1-\alpha+is)^{-3}<0.
$$
Hence, the function $(\alpha u-\psi(\alpha))'$ is also concave. This concavity implies that
$$
(\alpha u-\psi(\alpha))'(0)+\alpha^*(\alpha u-\psi(\alpha))''(\alpha^*)<(\alpha u-\psi(\alpha))'(\alpha^*).
$$
Combining this inequality with \eqref{eq:first-deri} and \eqref{eq:second-deri}, we have
\begin{equation}\label{eq:lower-alpha-star}
\alpha^* >\frac{u}{\sum_{i=1}^n(1-\alpha^*+is)^{-2}}> \frac{u(1-\alpha^*+ns)}{\sum_{i=1}^n(1-\alpha^*+is)^{-1}}= \frac{u\left(1-\alpha^*+ns\right)}{u+\sum_{i=1}^n(1+is)^{-1}}.
\end{equation}
The identity in \eqref{eq:lower-alpha-star} follows from the fact that $\alpha^*$ is the zero of $(\alpha u-\psi(\alpha))'$, which was given in equation \eqref{eq:first-deri}. We make the following change of variable 
\begin{align}\label{eq:change variable}
u=w\sum_{i=1}^n(1+is)^{-1}.
\end{align} 
Then the lower bound \eqref{eq:lower-alpha-star} can be rewritten as 
\begin{align}\label{eq:lower-alpha-star-1}
\alpha^*>\frac{w}{1+2w}\left(1+ns\right).
\end{align}
Recall that $\alpha u-\psi(\alpha)$ is increasing on $[0, \alpha^*]$. Then we apply \eqref{eq:sup+}, \eqref{eq:psi(a)} and \eqref{eq:lower-alpha-star-1} to obtain
$$
\psi_+^*(u)>\frac{w(1+w)}{1+2w}\sum_{i=1}^n\frac{1+ns}{1+is}+\sum_{i=1}^n\log\left(1-\frac{w}{1+2w}\cdot\frac{1+ns}{1+is}\right).
$$
Owing to the inequality $\log(1-x)\geq-x-4x^2/5$ for $0\leq x\leq1/2$, we have
\begin{align}
\psi_+^*(u) &> \frac{w(1+w)}{1+2w}\sum_{i=1}^n\frac{1+ns}{1+is}-\frac{w}{1+2w}\sum_{i=1}^n\frac{1+ns}{1+is}-\frac{4w^2}{5(1+2w)^2}\sum_{i=1}^n\left(\frac{1+ns}{1+is}\right)^2 \notag\\
&>\frac{w^2}{1+2w}\sum_{i=1}^n\frac{1+ns}{1+is}-\frac{4w^2}{5(1+2w)^2}\sum_{i=1}^n\frac{1+ns}{1+is} \label{eq:second ineq}\\
&>\frac{w^2}{5(1+2w)}\sum_{i=1}^n\frac{1+ns}{1+is} \notag\\
&>\frac{n(1+ns)}{15}\min\{w, w^2\}. \label{eq:last ineq}
\end{align}
Since $s\in(-1/n, 0]$, we have $1+ns<1+is$ for $1\leq i<n$, which yields $\big(\frac{1+ns}{1+is}\big)^2<\frac{1+ns}{1+is}$. This was used in inequality \eqref{eq:second ineq}. The last inequality \eqref{eq:last ineq} follows from $0<1+is<1$ and the simple observation that $\frac{w^2}{1+2w}>\frac{1}{3}\min\{w, w^2\}$. The latter fact can be verified by considering two cases $0<w\leq 1$ and $w>1$.
By Corollary \ref{cor:devia-est}, for any $u>0$ (or, equivalently, $w>0$),
\begin{align}\label{eq:tail+u}
\mP\left(\widetilde{h}(X)-h(X)>u\right)\leq \exp\left(-\frac{n(1+ns)}{15}\min\{w, w^2\}\right).
\end{align}
Identity \eqref{eq:change variable}, together with $1+ns<1+is$ for $1\leq i<n$, implies that $u<\frac{nw}{1+ns}$. Then, by \eqref{eq:tail+u}, we have that for any $w>0$,
$$
\mP\left(\widetilde{h}(X)-h(X)>\frac{nw}{1+ns}\right)\leq \exp\left(-\frac{n(1+ns)}{15}\min\{w, w^2\}\right).
$$
Then the upper tail estimate \eqref{eq:tail+} follows from the change of variable $t=\frac{w}{1+ns}$.

\textbf{Lower tail}. The lower tail estimate \eqref{eq:tail-} can be proved in a similar manner. Recall that the function $\psi$ is defined as per \eqref{eq:psi(a)}. As a function of $\alpha<0$, it is easy to see from \eqref{eq:first-deri} that $(-u\alpha-\psi(\alpha))'<0$ for $u\geq\sum_{i=1}^n(1+is)^{-1}$. In this case, we have $\psi_-^*(-u)=\infty$. This, together with Corollary \ref{cor:devia-est}, yields that $\widetilde{h}(X)-h(X)\geq -\sum_{i=1}^n(1+is)^{-1}$ with probability one. Hence, the lower tail estimate \eqref{eq:tail-} trivially holds for $t>n^{-1}\sum_{i=1}^n(1+is)^{-1}$. Next, we will assume that $0<u<\sum_{i=1}^n(1+is)^{-1}$. Similar to the upper tail case, one can check that there is an unique solution of $(-u\alpha-\psi(\alpha))'=0$, which we denote by $\alpha^*$ by adopting the abuse of notation. Moreover, the function $-u\alpha-\psi(\alpha)$ is increasing on $(-\infty, \alpha^*)$ and decreasing on $[\alpha^*, 0]$. Hence, we have
\begin{align}\label{eq:sup-}
\psi_-^*(-u) = -\alpha^* u-\psi(\alpha^*).
\end{align}
Identity \eqref{eq:second-deri} still holds with $u$ replaced by $-u$. Hence, $(-u\alpha-\psi(\alpha))'$ is concave. This concavity implies that
$$
(-u\alpha-\psi(\alpha))'(\alpha^*)-\alpha^*(-u\alpha-\psi(\alpha))''(0)<(-u\alpha-\psi(\alpha))'(0).
$$
Combining this with \eqref{eq:first-deri} and \eqref{eq:second-deri} with $u$ replaced by $-u$, we have
\begin{equation}\label{eq:upper-alpha-star}
\alpha^*<\frac{-u}{\sum_{i=1}^n(1+is)^{-2}}<\frac{-u(1+ns)}{\sum_{i=1}^n(1+is)^{-1}}=-w(1+ns).
\end{equation}
The second inequality uses $1+ns<1+is$ for $1\leq i<n$ and the last identity follows from the change of variable \eqref{eq:change variable}. As mentioned before, $-\alpha u-\psi(\alpha)$ is decreasing on $[\alpha^*, 0]$. Plug the upper bound of $\alpha^*$ given in \eqref{eq:upper-alpha-star} into \eqref{eq:sup-} and invoke the assumption $0<u<\sum_{i=1}^n(1+is)^{-1}$ to obtain
\begin{align}\label{eq:psi-minus-lower}
\psi_-^*(-u)> -w(1-w)\sum_{i=1}^n\frac{1+ns}{1+is}+\sum_{i=1}^n\log\left(1+w\cdot\frac{1+ns}{1+is}\right).
\end{align}
Owing to the inequality $\log(1+x)>x-x^2/2$ for $x\geq0$, we have
\begin{align*}
\psi_-^*(-u) &> -w(1-w)\sum_{i=1}^n\frac{1+ns}{1+is}+w\sum_{i=1}^n\frac{1+ns}{1+is}-\frac{w^2}{2}\sum_{i=1}^n\left(\frac{1+ns}{1+is}\right)^2\\
&>\frac{w^2}{2}\sum_{i=1}^n\frac{1+ns}{1+is}>\frac{n(1+ns)}{2}w^2.
\end{align*}
Similar to the upper tail case, we applied $\big(\frac{1+ns}{1+is}\big)^2<\frac{1+ns}{1+is}$ in the second inequality and $0<1+is<1$ in the last inequality. By Corollary \ref{cor:devia-est}, for any $u>0$ (or, equivalently, any $w>0$), we have
\begin{align}\label{eq:tail+u-1}
\mP\left(\widetilde{h}(X)-h(X)<-u\right)\leq \exp\left(-\frac{n(1+ns)}{2}w^2\right).
\end{align}
Identity \eqref{eq:change variable}, together with $1+ns<1+is$ for $1\leq i<n$, implies that $u<\frac{nw}{1+ns}$. This together with \eqref{eq:tail+u-1} implies that for any $w>0$,
$$
\mP\left(\widetilde{h}(X)-h(X)<-\frac{nw}{1+ns}\right)\leq \exp\left(-\frac{n(1+ns)}{2}w^2\right).
$$
Then the lower tail estimate \eqref{eq:tail-} readily follows the change of variable $t=\frac{w}{1+ns}$.
\end{proof}

The following result provides bounds of tail probabilities of the information content for $s$-concave densities in the range $s>0$. This, together with Corollary \ref{cor:bounds-s>0}, gives a complete picture of the concentration phenomenon of the information content for different ranges of convex measures. We only give a proof sketch of the statement, since the proof is the same as that of Corollary \ref{cor:bounds-s>0} with a minor change of notations.

\begin{cor}
Let $s\in(0, \infty)$. Let $X$ be a random vector in $\mR^n$ with density $f$ being $s$-concave. For any $t>0$, we have
\begin{equation}
\mP\left(\widetilde{h}(X)-h(X)>nt\right) \leq (en)^{1/s}\cdot e^{-nt} \label{eq:upper-tail+s}
\end{equation}
\begin{equation}
\mP\left(\widetilde{h}(X)-h(X)<-nt\right) \leq \exp\left(-\frac{n(1+s)^3}{2}t^2\right) \label{eq:lower-tail+s}.
\end{equation}
\end{cor}

\begin{proof}
The upper tail estimate \eqref{eq:upper-tail+s} readily follows from $\psi_+^*(u)\geq u-\psi(1)$ for any $u>0$ and the simple observation
$$
\psi(1)=\sum_{i=1}^n\log \left(1+\frac{1}{is}\right)-\sum_{i=1}^n\frac{1}{1+is}<\sum_{i=1}^n\log \left(1+\frac{1}{is}\right)<\sum_{i=1}^n\frac{1}{is}<\frac{1+\log n}{s}.
$$ 
The lower tail estimate \eqref{eq:lower-tail+s} trivially holds for $t>n^{-1}\sum_{i=1}^n(1+is)^{-1}$. This follows from the fact that $\widetilde{h}(X)-h(X)\geq -\sum_{i=1}^n(1+is)^{-1}$ still holds with probability one in the case $s>0$. In the complementary case, the argument of Corollary \ref{cor:bounds-s>0} yields an unique $\alpha^*$ such that \eqref{eq:sup-} holds. Instead of \eqref{eq:upper-alpha-star}, we have the upper bound 
$$
\alpha^*\leq-w\left(1+s\right).
$$
Similar to \eqref{eq:psi-minus-lower}, we have
$$
\psi_-^*(-u)> -w(1-w)\sum_{i=1}^n\frac{1+s}{1+is}+\sum_{i=1}^n\log\left(1+w\cdot\frac{1+s}{1+is}\right).
$$
Following the proof of Corollary \ref{cor:bounds-s>0}, we have
$$
\psi_-^*(-u)>\frac{n(1+s)}{2}w^2.
$$
Then the lower tail estimate \eqref{eq:lower-tail+s} follows from Corollary \ref{cor:devia-est} and the simple fact that for $s>0$
$$
\sum_{i=1}^n(1+is)^{-1}<\frac{n}{1+s}.
$$
\end{proof}

\begin{remark}
When $0<nt<\sum_{i=1}^n(is(1+is))^{-1}$, we can follow the proof of Corollary \ref{cor:bounds-s>0} (with a minor change of notations) to obtain
\begin{align}\label{eq:upper-tail+s-1}
\mP\left(\widetilde{h}(X)-h(X)>nt\right)\leq \exp\left(-\frac{n(1+s)^2}{15}\min\{t, (1+s)t^2\}\right).
\end{align}
This upper tail estimate will hold for any $t>0$ if Theorem \ref{thm:main} holds for $\alpha<1+s$ and $s>0$.
\end{remark}

\section{Monotonicity of moments of $s$-concave functions}
\label{sec:comp}

Let $s\in\mR$. Let $\varphi:[0,\infty)\to[0,\infty)$ be an integrable $s$-concave function that is right continuous at $0$. The Mellin transform  $\M_{\varphi}(p)$ is defined by 
\begin{equation}\label{eq:mellin}
\M_{\varphi}(p)=\int_0^{\infty} t^{p-1} \varphi(t) dt. 
\end{equation}
It is not difficult to see that  $\M_{\varphi}(p)$ is well defined for $p>0$ if $s\ge0$, and for $0<p<-1/s$ if $s<0$. 
For $s<0$, this follows from the fact that $\varphi(t)=O(t^{1/s})$ as $t\to\infty$. 
In fact, the Mellin transform $\M_\varphi(p)$ is analytic on the half plane $\{p\in\C: \re(p)>0\}$ for $s\ge0$, and on the strip 
$\{p\in\C: 0<\re(p)<-1/s\}$ for $s<0$. 

Then the function $\Psi_\varphi^s(p)$ introduced in Proposition \ref{prop:tp-1} can be rewritten as 
\begin{align}\label{eq:psi-func}
\Psi_\varphi^s(p)=C_s(p)\M_{\varphi}(p), \ p>0
\end{align}
and $\Psi_\varphi^s(0)=\varphi(0)$. 
One can check that $\M_{\varphi_s}(p)=C_s(p)^{-1}$, where $C_s(p)$ is defined in \eqref{eq:c-s-p} and the function $\varphi_s$ is defined as
\begin{align}\label{eq:varphi-s}
\varphi_s(t)=\left\{  \begin{array}{ll}
(1-t)^{1/s}{\bf{1}}_{[0,1]}  &\text{for}\ s>0\\
 e^{-t}\bf{1}_{\mR_+}  &\text{for}\ s=0\\
(1+t)^{1/s}\bf{1}_{\mR_+}  &\text{for}\ s<0.\\  
    \end{array} \right.
\end{align}
Hence, for $p>0$, we can rewrite $\Psi_\varphi^s(p)$ as the normalized Mellin transform
\begin{align}\label{eq:psi-func-1}
\Psi_\varphi^s(p)=\frac{\M_{\varphi}(p)}{\M_{\varphi_s}(p)}.
\end{align}

The log-concavity of $\Psi^s_\varphi(p)$ proved in Proposition \ref{prop:tp-1} implies that for any $r\ge0$, the function defined as
 $$
 p\mapsto
 \left\{  \begin{array}{ll}
 \left(\frac{\Psi^s_\varphi(p+r)}{\Psi^s_\varphi(r)}\right)^{1/p} & \text{for}~p\neq0\\
 \exp((\ln \Psi^s_\varphi)'(r)) & \text{for}~p=0
 \end{array}\right. 
 $$
is non-increasing. Recall that $\Psi_\varphi^s(0)=\varphi(0)$. Set $r=0$. If $\varphi(0)>0$, this particularly implies that the function defined by
\begin{equation}\label{eq:q-func}
Q^s_\varphi(p)
=\left(\frac{\Psi^s_\varphi(p)}{\Psi^s_\varphi(0)}\right)^{1/p}, \ p>0
\end{equation}
and $Q^s_\varphi(0)=\exp((\ln \Psi^s_\varphi)'(0))$ is non-increasing on $[0,\infty)$ for $s\ge0$, and on $[0,-1/s)$ for $s<0$. 


Suppose that $\varphi$ is not merely continuous but also right differentiable at $0$. Similar to the gamma function, the Mellin transform $\M_{\varphi}(p)$ can be extended to a meromorphic function in the domain $D_s=\{p\in\C: \re(p)>-1\}$ for $s\ge0$ and 
$D_s=\{p\in\C: -1<\re(p)<-1/s\}$ for $s<0$ with a simple pole at $0$ by the formula 
$$
\M_\varphi(p)=\frac{1}{p}\int_0^{\infty}t^p(-\varphi'(t))dt=\int_0^1t^{p-1} (\varphi(t)-\varphi(0))dt+\frac{\varphi(0)}{p}+\int_1^{\infty}t^{p-1} \varphi(t)dt.
$$
Using integration by parts, for $\re(p)>0$, we can recover the  definition of Mellin transform in \eqref{eq:mellin}, while for $-1<\re(p)<0$ we have a simpler form
\begin{equation}\label{eq:mellin-1}
\M_\varphi(p)=\int_0^{\infty}t^{p-1} (\varphi(t)-\varphi(0))dt.
\end{equation}
Particularly, the Mellin transform $\M_{\varphi_s}$ of the function $\varphi_s$ defined as per \eqref{eq:varphi-s} is meromorphic in the domain $D_s$ with one simple pole at $0$. 
Moreover,  $\varphi_s$  doesn't vanish on $D_s$ and $\varphi_s(0)=1$.
Hence, the function $\Psi^s_\varphi$ defined as per \eqref{eq:psi-func-1} has an analytic extension to the domain $D_s$. Correspondingly, we can analytically extend the function $Q^s_\varphi(p)$ defined as per \eqref{eq:q-func} to the domain $D_s^\R=\{p\in \R\cap D_s; \Psi^s_{\varphi}(p)>0\}$.   

In the case $s=0$, Koldobsky, Pajor and Yaskin \cite{KPY08} 
showed that $Q^0_\varphi$ is non-increasing on  $(-1,\infty)$ under the assumption that $\varphi$ is non-increasing. 
In the following, we extend this monotonicity property to the whole range of $s$-concave functions $\varphi$ 
and we also drop the monotonicity hypothesis on $\varphi$.

\begin{thm}\label{th:monot-phi}
Let $s\in\R$. Let $\varphi:[0,\infty)\to[0,\infty)$ be an integrable $s$-concave function such that $\varphi(0)>0$ and that $\varphi$ is right differentiable at $0$. Set $p_0=\inf\{p>-1: \Psi^s_\varphi(p)>0\}$. Then we have 
\begin{enumerate}
\item $p_0\in[-1,0)$ and if $\varphi$ is non-increasing then $p_0=-1$.
\item $\Psi^s_\varphi(p)>0$ for every $p\in(p_0,0]$, thus $Q^s_\varphi(p)$ is well defined and analytic on $(p_0,\infty)$ for $s\ge0$ and on $(p_0,-1/s)$ for $s<0$.
\item $Q^s_\varphi(p)$ is non-increasing on $(p_0,\infty)$ for $s\ge0$ and on $(p_0,-1/s)$ for $s<0$. In particular, for $s<0$, the function
$$
Q^s_\varphi(p)= \left( C_s(p)\int_0^{\infty} t^{p-1} \frac{\varphi(t)-\varphi(0)}{\varphi(0)} dt \right)^{1/p}
$$
is non-increasing on $(p_0,0)$, where the constant $C_s(p)$ is defined in \eqref{eq:c-s-p}.
\end{enumerate}
\end{thm}

\begin{proof}  
To see the first statement, we recall that $\Psi^s_\varphi(0)=\varphi(0)$. This, together with the assumption that $\varphi(0)>0$, readily yields that $p_0\in[-1,0)$. 
For $-1<p<0$, the Mellin transform $\M_\varphi(p)$ is defined as per \eqref{eq:mellin-1}. If we further assume that $\varphi$ is non-increasing, then it is easy to see that 
$\M_\varphi(p)< 0$. Particularly, we have $\M_{\varphi_s}(p)<0$ since $\varphi_s$ defined in \eqref{eq:varphi-s} is a decreasing function. Hence, we have $\Psi^s_\varphi(p)=\M_\varphi(p)/\M_{\varphi_s}(p)>0$ for $-1<p<0$. This yields that $p_0=-1$ under the monotonicity assumption on $\varphi$.

Next, we prove the second statement. By homogeneity, we may assume that $\varphi(0)=1$. Suppose that $\Psi^s_\varphi(q)>0$ for some $-1<q<0$. Then we have $Q_\varphi^s(q)=(\Psi^s_\varphi(q))^{1/q}>0$. (Here, we use the fact that $\Psi^s_\varphi(0)=\varphi(0)$ and the assumption that $\varphi(0)=1$). We define $\psi(t)=\varphi_s(t/Q_\varphi^s(q))$. One can check that $\M_\psi(p)=(Q_\varphi^s(q))^p\M_{\varphi_s}(p)$ for every $-1<p<0$. In particular, we have
$\M_\psi(q)=(Q_\varphi^s(q))^q\M_{\varphi_s}(q)=\M_\varphi(q)$. Since
$$
\M_\varphi(q)-\M_\psi(q)=\int_0^{\infty}t^{q-1}(\varphi(t)-\psi(t))dt=0,
$$
one can deduce that $\varphi-\psi$ changes sign at least once on $(0,\infty)$. Notice that $\varphi(0)=\psi(0)$, $\varphi$ is $s$-concave and $\psi$ is $s$-affine. It follows that $\varphi-\psi$ changes sign at most once. Hence, $\varphi-\psi$ changes sign exactly at one point $t_0>0$ and by concavity one has necessarily that $\varphi-\psi\ge0$ on $(0,t_0)$ and $\varphi-\psi\le0$ on $(t_0,\infty)$. 
We define $H(t)=\int_t^{\infty}u^{q-1}(\varphi(u)-\psi(u))du.$
Then we have $H'(t)=-t^{q-1}(\varphi(t)-\psi(t))$. Hence, $H(t)$ is non-increasing on $[0,t_0]$ and non-decreasing on $[t_0,\infty)$. 
Since $H(0)=0$ and $H(\infty)=0$, it follows that $H\le0$ on $[0,\infty)$. Using integration by parts, for $-1<q<r<0$, we have
$$
\M_\varphi(r)-\M_\psi(r)=\int_0^{\infty}t^{r-q}t^{q-1}(\varphi(t)-\psi(t))dt=(r-q)\int_0^{\infty}t^{r-q-1}H(t)dt\le0.
$$
Therefore, we have
$$
\M_\varphi(r)\le\M_\psi(r)=(Q_\varphi^s(q))^r\M_{\varphi_s}(r)<0.
$$
The last inequality follows from the facts that $Q_\varphi^s(q)>0$ and that $\M_{\varphi_s}(r)<0$. The latter fact was mentioned in the proof of the first statement. Then we have
\begin{equation}\label{eq:monoton}
\Psi^s_\varphi(r)=\frac{\M_\varphi(r)}{\M_{\varphi_s}(r)} \ge Q_\varphi^s(q)^r > 0.
\end{equation} 
Hence, $Q^s_\varphi$ is well defined and analytic on $(p_0,\infty)$ for $s\ge0$ and on $(p_0,-1/s)$ for $s<0$. 

For the third statement, we have mentioned in the paragraph after \eqref{eq:psi-func-1} that the monotonicity of $Q_\varphi^s$ on $(0,\infty)$ for $s\ge0$ and on $(0,-1/s)$ for $s<0$ follows from the log-concavity of $\Psi^s_\varphi(p)$ that was proved in Proposition \ref{prop:tp-1}. Hence, it suffices to prove the monotonicity on $(p_0, 0)$. Taking the $r$-th root of \eqref{eq:monoton}, for $p_0<q<r<0$, we have
$$
Q_\varphi^s(r)=(\Psi^s_\varphi(r))^{1/r}\le Q_\varphi^s(q).
$$
This shows that $Q_\varphi^s$ is non-increasing on $(p_0,0)$.
\end{proof}

Using Theorem \ref{th:monot-phi}, we establish the following theorem, which extends
results of Borell \cite{Bor73a} and Fradelizi, Gu\'edon and Pajor \cite{FGP14}. 

\begin{thm}\label{th:monot-f}
Let $s\in\R$ and let $\mu$ be a $s$-concave probability measure on $\R^n$. Let $f:\R^n\to[0,\infty)$ be a concave function on its support. Then the function 
$$
p\mapsto  \left( \frac{C_s(p)}{p}\int_{\R^n} f(x)^pd\mu(x) \right)^{1/p}
$$
is non-increasing on $(-1,\infty)$ if $s\ge0$ and on $(-1, -1/s)$ if $s<0$, where the constant $C_s(p)$ is defined in (\ref{eq:c-s-p}).
\end{thm}

\begin{proof}
For $s\ge0$, the monotonicity on $(0,\infty)$ follows from Borell \cite{Bor73a}.
For $s<0$, the monotonicity on $(0, -1/s)$ is due to Fradelizi, Gu\'edon and Pajor \cite{FGP14}. 
It suffices to prove extensions of both cases on $(-1,0)$. 
Let $-1<p<0$. Define $\varphi(t)=\mu(\{x\in\R^n: f(x)>t)\})$ for $t\ge0$. Since $f$ is concave and $\mu$ is $s$-concave, we deduce that $\varphi$ is $s$-concave. Integrating on the level sets, we have 
\begin{align*}
\int_{\mR^n} f(x)^pd\mu(x)&=-\int_{\mR^n} \int_{f(x)}^{\infty}pt^{p-1}dtd\mu(x)=p\int_0^{\infty}t^{p-1}(\varphi(t)-\varphi(0))dt=p\M_\varphi(p).
\end{align*}
The second identity follows from Fubini's theorem. The last identity follows from the definition of Mellin transform $\M_\varphi(p)$ defined as per \eqref{eq:mellin-1}. Then the desired statement follows from the third statement of Theorem \ref{th:monot-phi} and the fact that $\varphi(0)=1$.
\end{proof}

Let $\mu$ be the uniform probability measure on a convex body $K$ of $\R^n$. By the Brunn-Minkowski theorem, it is known that $\mu$ is $1/n$-concave. Notice that 
$$
C_{1/n}(p)=B(p,n+1)^{-1}=\frac{p(p+1)\cdots(p+n)}{n!}=p{n+p \choose n}.
$$ 
Then we can apply Theorem \ref{th:monot-f} to recover a result of Borell \cite{Bor73b} (see, p. 435, also Theorem 5.1 of \cite{GZ98}), which generalizes the classical theorem of Berwald \cite{Ber47}, see also \cite{MP89}, which was restricted to the range $p>0$.

\begin{cor}\label{cor:monot-f-K}
Let $K$ be a convex body of $\R^n$. Let $f:K\to[0,\infty)$ be a concave function. Then the function 
$$
p\mapsto  \left( {n+p \choose n}\frac{1}{|K|}\int_K f(x)^pdx \right)^{1/p}
$$
is non-increasing on $(-1,\infty)$.
\end{cor}

As mentioned before, the monotonicity of $Q^s_\varphi$ follows from the log-concavity of $\Psi^s_\varphi$. We conclude this section with the following conjecture.

\begin{conj}\label{conj:log-concavity}
Let $s\in\R$. Let $\varphi:[0,\infty)\to[0,\infty)$ be an integrable $s$-concave function such that $\varphi(0)>0$ and  that $\varphi$ is right differentiable at $0$. 
Then the function 
$$
\Psi^s_\varphi(p)= C_s(p)\int_0^{\infty} t^{p-1} (\varphi(t)-\varphi(0)) dt
$$
is log-concave on $[p_0, 0)$, where $p_0=\min\{p>-1: \Psi^s_\varphi(p)>0\}$.
\end{conj}

\begin{remark}
Using a standard level set argument, if Conjecture \ref{conj:log-concavity} were true, then it could be used to show that the 
log-concavity statement in Theorem \ref{thm:moments-lebesgue} holds in a larger range of $p>\max(-s,-ns)$. Correspondingly, Proposition \ref{prop:log-concave} and Theorem \ref{thm:main} will hold for $\alpha<1+\min\{s, ns\}$ and the upper tail estimate \eqref{eq:upper-tail+s-1} will hold for any $t>0$.
We can prove this extension of Theorem \ref{thm:moments-lebesgue} when $f$ is defined on $\mR$, but do not
include the details in this paper.
\end{remark}

\section{Consequences} 
\label{sec:conseq}

In this section, we develop various consequences of the results obtained. First, we give a probabilistic interpretation of Theorem~\ref{thm:moments-lebesgue}, namely  
that all R\'enyi entropies of a $s$-concave density are effectively comparable.
Recall that, for $p\in (0, 1)\cup(1, \infty)$, the R\'enyi entropy of order $p$ of a density $f$ is defined as
\begin{equation}\label{eq:renyi}
h_p(X)=\frac{1}{1-p}\log\int_{\mR^n}f(x)^pdx.
\end{equation}
For $p\in\{0, 1, \infty\}$, it is defined in the natural way by taking limits; that is,
$h_1(f)=h(f)$ is the Shannon-Boltzmann entropy; $h_\infty(f)=-\log \|f\|_{\infty}$, where $\|f\|_{\infty}$ is the essential supremum of $f$; and $h_0(f)=\log|\supp\{f\}|$, where $|\supp\{f\}|$ is the Lebesgue measure of the support of $f$. It readily follows
from Jensen's inequality that $h_p(f) \geq h_q(f)$ whenever $q>p\geq 0$ for any density $f$.
The notable fact is that for $s$-concave densities, this inequality can be reversed up to a precise
dimensional constant.

\begin{cor}\label{cor:inf-h}
Let $s\in(-1/n, \infty)$. Let $X$ be a random vector in $\mR^n$ with density $f$ being $s$-concave. 
For $\max(0,-ns)\leq p< q\leq \infty$, we have
$$
h_p(f)-h_q(f)\leq h_p(f_{s})-h_q(f_{s}) ,
$$
where the family of extremizers $f_s$  are defined in \eqref{eq:extr}.
\end{cor}

\begin{proof}
From Remark \ref{rmk:extr}, we know that if $f:\mR^n\to[0, \infty)$ is an  integrable $s$-concave function, then
$$
p\mapsto \frac{\int_{\mR^n}f(x)^pdx}{\int_{\mR^n}f_{s}(x)^pdx}
$$
is log-concave for $p>\max\{0,-ns\}$. If $f$ is a $s$-concave density, then in the language of R\'enyi entropies, one deduces that
$$
\xi(p)=(1-p) (h_p(f) -h_p(f_{s}))
$$
is concave for $p>\max\{0,-ns\}$.  Moreover, given that both $f$ and $f_{s}$ integrate to one, we should set $\xi(1)=0$; with this choice, $\xi$ is concave on its maximal interval of definition. It is a well known fact (see, e.g., \cite[Exercise 3.1]{BV04:book}) that a function $\xi: (a,b)\to\mR$ is concave if and only if
$$
\frac{\xi(p)-\xi(\tilde{p})}{p-\tilde{p}}
$$ 
is non-increasing in $p$ (for fixed $\tilde{p}$) and non-decreasing in $\tilde{p}$ (for fixed $p$). Taking $\tilde{p}=1$, we see that
$$
h_p(f) -h_p(f_{s})
=\frac{\xi(p)}{1-p}
=-\frac{\xi(p)-\xi(1)}{p-1} 
$$ 
is non-decreasing in $p$, i.e., we have that
$$
h_p(f) -h_p(f_{s})
\leq h_q(f) -h_q(f_{s})
$$
for any $\max\{0,-ns\}< p<q<\infty$. Rearranging gives the desired inequality in this range. Taking limits
extends the range to $q=\infty$ and to $p=\max\{0,-ns\}$.
\end{proof}

\begin{remark}
One can check that for $p\in (0,1)\cup(1,\infty)$,
\begin{equation}\label{eq:Lsp1}
h_p(f_{s})=\frac{p}{1-p}\sum_{i=1}^n\log(1+is)-\frac{1}{1-p}\sum_{i=1}^n\log(p+is).
\end{equation}
We also have the limiting cases
\begin{align}
h(f_{s}) & =\sum_{i=1}^n(1+is)^{-1}-\sum_{i=1}^n\log(1+is) \label{eq:Lsp2}\\
h_\infty(f_{s}) & =-\sum_{i=1}^n\log(1+is) \label{eq:Lsp3}.
\end{align}
Hence, for a $s$-concave density $f$, Corollary~\ref{cor:inf-h} yields, when $p<q$, the numerical bound
\begin{equation}\label{eq:compare}
h_p(f) - h_q(f)
\leq \frac{p-q}{(1-p)(1-q)}\sum_{i=1}^n\log(1+is)-\sum_{i=1}^n\log \frac{(p+is)^\frac{1}{1-p}}{(q+is)^\frac{1}{1-q}}
\end{equation}
with the cases where $p$ or $q$ is 0, 1 or $\infty$ understood by taking limits.
\end{remark}

\begin{remark}\label{rmk:comparison-s=0}
The log-concave case of Corollary~\ref{cor:inf-h} was first observed in \cite{MW19}. 
Also, taking $p=1$ and $q=\infty$ in \eqref{eq:compare}, we have
$$
h(f)-h_\infty(f)\leq \sum_{i=1}^n(1+is)^{-1}.
$$
The special case of this inequality with $-1/(n+1)\leq s\leq 0$ was obtained earlier in \cite{BM11:it}.  
\end{remark}

Second, we present an improvement of \cite[Proposition 5.1]{BM12:jfa} (whose analogue for the special case of log-concave probability 
measures was first observed by Klartag and Milman \cite{KM05} and later refined in \cite[Corollary 4.7]{FMW16}).

\begin{cor}\label{coro:ess-supp}
Let $s\in(-1/n, 0)$. Let $X$ be a random vector in $\mR^n$ with density being $s$-concave. For any $c_0\in(0, 1)$ such that $n\log c_0<-\sum_{i=1}^n(1+is)^{-1}$, there exists $c_1\in(0, 1)$ depending only on $c_0$ and $s$ such that
$$
\mP(f(X)\geq c_0^n\|f\|_\infty)\geq 1-c_1^n.
$$

\end{cor}

\begin{proof}
Set $t=-n\log c_0-\sum_{i=1}^n(1+is)^{-1}$. By Remark \ref{rmk:comparison-s=0}, we have
$$
\mP(f(X)\leq c_0^n\|f\|_\infty)= \mP(\widetilde{h}(X)\geq -\log\|f\|_\infty-n\log c_0)\leq \mP(\widetilde{h}(X)-h(X)\geq t). 
$$
Then, by Corollary \ref{cor:devia-est}, we have
\begin{equation}\label{eq:dev-f-infty}
\mP(f(X)\leq c_0^n\|f\|_\infty)\leq e^{-\psi_+^*(t)}.
\end{equation}
The function $\psi$ is defined as per \eqref{eq:psi(a)}. By \eqref{eq:second-deri}, we know that  $\alpha t-\psi(\alpha)$ is concave for $\alpha<1+ns$. Hence, the maximum is achieved at $\alpha^*$ such that $(\alpha t-\psi(\alpha))'=0$,
i.e., 
\begin{align}\label{eq:alpha*}
\sum_{i=1}^n(1-\alpha^*+is)^{-1}=-n\log c_0.
\end{align}
Using \eqref{eq:psi(a)}, we have
\begin{align*}
\psi_ +^*(t) &= \alpha^* t-\psi(\alpha^*)= -n\alpha^*\log c_0+\sum_{i=1}^n\log\frac{1-\alpha^*+is}{1+is}.
\end{align*}
Plug this into \eqref{eq:dev-f-infty} to obtain
$$
\mP(f(X)\leq c_0^n\|f\|_\infty) \leq c_1^n,
$$
where
$$
c_1=c_0^{\alpha^*}\left(\prod_{i=1}^n\frac{1+is}{1-\alpha^*+is}\right)^{1/n}.
$$
This is equivalent to the desired statement. To see that $c_1<1$, we take the logarithm of $c_1$,
\begin{align}
\log c_1 &= \alpha^*\log c_0+\frac{1}{n}\sum_{i=1}^n\log\frac{1+is}{1-\alpha^*+is} \notag \\
&= -\frac{1}{n}\sum_{i=1}^n\frac{\alpha^*}{1-\alpha^*+is}+\frac{1}{n}\sum_{i=1}^n\log\frac{1+is}{1-\alpha^*+is} \label{eq:plug alpha*} \\
&=-\frac{1}{n}\sum_{i=1}^n\left(\frac{\alpha^*}{1-\alpha^*+is}-\log\left(1+\frac{\alpha^*}{1-\alpha^*+is}\right)\right) \notag\\
&<0. \notag
\end{align}
We used equation \eqref{eq:alpha*} in the second identity \eqref{eq:plug alpha*}. The inequality follows from $\log(1+x)<x$ for $x>0$.
\end{proof}

\begin{remark}
For $s>0$, Theorem \ref{thm:main} holds for $\alpha<1$. In this range, one might not find the solution $\alpha^*$ of equation \eqref{eq:alpha*}. The proof of Corollary \ref{coro:ess-supp} will break down. Assuming the validity of Conjecture \ref{conj:log-concavity}, Theorem \ref{thm:main} will hold in a larger range $\alpha<1+s$. Then we can find such an $\alpha^*$. Following the same argument, Corollary \ref{coro:ess-supp} will also hold for any $s>0$.
\end{remark}

Third, we show that the entropy power of a convex measure is linked to the volume of a typical set or effective support. 

\begin{cor}
Under assumptions and notations of Corollary \ref{coro:ess-supp}, we have
$$
c_0^n|K|\leq e^{h(f)}\leq(1-c_1^n)^{-1}\exp\left(\sum_{i=1}^n(1+is)^{-1})\right)|K|,
$$
where the convex set $K=\{x\in \mR^n: f(x)\geq c_0^n\|f\|_\infty\}$. 
\end{cor}
\begin{proof}
From the definition of $K$ we can see that
$$
c_0^n\|f\|_\infty |K|\leq \int_Kf(x)dx\leq 1.
$$
Corollary \ref{coro:ess-supp} implies that
$$
1-c_1^n\leq \int_Kf(x)dx\leq \|f\|_\infty|K|.
$$
Remark \ref{rmk:comparison-s=0} implies the entropy power estimate
$$
\|f\|_\infty^{-1}\leq e^{h(f)}\leq \exp\left(\sum_{i=1}^n(1+is)^{-1}\right)\|f\|_\infty^{-1}.
$$
Then the desired statement follows by combining the above inequalities.
\end{proof}

The entropy power (usually defined as $e^{h(f)}$ raised to the power $2/n$) is a notion with origins
in information theory. A class of inequalities for the entropy power of convolutions, known as entropy power
inequalities, play an important role in convex geometry, information theory, and probability theory, particularly
in connection with the central limit theorem, and may also be seen as probabilistic parallels to Brunn-Minkowski
and related inequalities for set volumes. For more on these connections, the reader may consult \cite{MMX17:0, FMMZ16, FMMZ18, Mad20};
in particular, the preceding corollary can be used to go back and forth between volumes and entropies in drawing
parallels between geometry and probability in the context of convex measures.

Fourth, we recover a result of Adamczak et al. \cite[Lemma 7.2]{AGLLOPT12},
which in turn generalizes a result of Fradelizi \cite{Fra97}
relating the maximum value of a log-concave density to its value at the mean.

\begin{cor}\label{coro:fra}
Let $s\in(-\frac{1}{n+1}, \infty)$. Let $X$ be a random vector in $\R^n$ with density $f$ being $s$-concave. Then we have
$$
\|f\|_\infty\leq C_{n, s}f(\mE X),
$$
whenever $\mE X$ exists, and $
C_{n, s}=\Big(1+\frac{ns}{1+s}\Big)^{1/s}.$
\end{cor}

\begin{proof}
When $s>0$, since $f^s$ is concave, we have 
$$
f^s(\mE X)\geq \mE f^s(X)=\int_{\mR^n} f^{s+1}(x)dx=e^{-sh_{s+1}(f)}.
$$
When $s<0$, since $f^s$ is convex, we have 
$$
f^s(\mE X)\leq \mE f^s(X)=e^{-sh_{s+1}(f)}.
$$
Raising to the power $1/s$ yields, in both cases,
$$
f(\mE X)\geq e^{-h_{s+1}(f)}.
$$
On the other hand, by Corollary~\ref{cor:inf-h}, we have
$$
h_{s+1}(f)\leq h_\infty(f) + h_{s+1}(f_s)-h_\infty(f_s).
$$
Hence, we have
$$
f(\mE X)\geq e^{-h_{\infty}(f)} e^{h_\infty(f_s)-h_{s+1}(f_s)} =\|f\|_\infty e^{h_\infty(f_s)-h_{s+1}(f_s)},
$$
which is the desired bound, and $h_p(f_s)$ are given in \eqref{eq:Lsp1}, \eqref{eq:Lsp2} and \eqref{eq:Lsp3}.
\end{proof}

One can check that Corollary \ref{coro:fra} is sharp for functions defined in \eqref{eq:extr}. Of course, if $s$ is too negative, it will not have finite mean, but the bound is useful in other cases.

\par\vspace{.1in}
\noindent {\bf Acknowledgments:} 
M.F. was supported in part by the Agence Nationale de la Recherche, project GeMeCoD (ANR 2011 BS01 007 01),
J.L. and  M.M. were supported in part by the U.S. National Science Foundation through grants DMS-1409504 and
CCF-1346564.


\begin{thebibliography}{10}

\bibitem{AGLLOPT12}
R.~Adamczak, O.~Gu\'{e}don, R.~Lata{\l}a, A.~E. Litvak, K.~Oleszkiewicz,
  A.~Pajor, and N.~Tomczak-Jaegermann.
\newblock Moment estimates for convex measures.
\newblock {\em Electron. J. Probab.}, 17(101):1--19, 2012.

\bibitem{Ber47}
L.~Berwald.
\newblock Verallgemeinerung eines {M}ittelwertsatzes von {J}. {F}avard f\"ur
  positive konkave {F}unktionen.
\newblock {\em Acta Math.}, 79:17--37, 1947.

\bibitem{BM11:aop}
S.~Bobkov and M.~Madiman.
\newblock Concentration of the information in data with log-concave
  distributions.
\newblock {\em Ann. Probab.}, 39(4):1528--1543, 2011.

\bibitem{BM11:cras}
S.~Bobkov and M.~Madiman.
\newblock Dimensional behaviour of entropy and information.
\newblock {\em C. R. Acad. Sci. Paris S\'er. I Math.}, 349:201--204, F\'evrier
  2011.

\bibitem{BM11:it}
S.~Bobkov and M.~Madiman.
\newblock The entropy per coordinate of a random vector is highly constrained
  under convexity conditions.
\newblock {\em IEEE Trans. Inform. Theory.}, 57(8):4940--4954, August 2011.

\bibitem{BM12:allerton}
S.~Bobkov and M.~Madiman.
\newblock An equipartition property for high-dimensional log-concave
  distributions.
\newblock In {\em Proc. 50th Annual Allerton Conf. on Communication, Control,
  and Computing}, pages 482--488. IEEE, Monticello, Illinois, October 2012.

\bibitem{BM12:jfa}
S.~Bobkov and M.~Madiman.
\newblock Reverse {B}runn-{M}inkowski and reverse entropy power inequalities
  for convex measures.
\newblock {\em J. Funct. Anal.}, 262:3309--3339, 2012.

\bibitem{BM13:goetze}
S.~G. Bobkov and M.~M. Madiman.
\newblock On the problem of reversibility of the entropy power inequality.
\newblock In {\em Limit theorems in probability, statistics and number theory},
  volume~42 of {\em Springer Proc. Math. Stat.}, pages 61--74. Springer,
  Heidelberg, 2013.
\newblock Available at {\tt arXiv:1111.6807}.

\bibitem{BGG18}
F.~Bolley, I.~Gentil, and A.~Guillin.
\newblock {Dimensional improvements of the logarithmic Sobolev, Talagrand and
  Brascamp-Lieb inequalities}.
\newblock {\em Ann. Prob.}, 46(1): 261--301, 2018.

\bibitem{Bor73a}
C.~Borell.
\newblock Complements of {L}yapunov's inequality.
\newblock {\em Math. Ann.}, 205:323--331, 1973.

\bibitem{Bor73b}
C.~Borell.
\newblock Integral inequalities for generalized concave or convex functions.
\newblock {\em J. Math. Anal. Appl.}, 43:419-440, 1973

\bibitem{Bor74}
C.~Borell.
\newblock Convex measures on locally convex spaces.
\newblock {\em Ark. Mat.}, 12:239--252, 1974.


\bibitem{Bor75a}
C.~Borell.
\newblock Convex set functions in {$d$}-space.
\newblock {\em Period. Math. Hungar.}, 6(2):111--136, 1975.

\bibitem{BV04:book}
S.~Boyd and L.~Vandenberghe.
\newblock {\em Convex optimization}.
\newblock Cambridge University Press, Cambridge, 2004.

\bibitem{CP89}
T.~M. Cover and S.~Pombra.
\newblock Gaussian feedback capacity.
\newblock {\em IEEE Trans. Inform. Theory.}, 35(1):37--43, 1989.

\bibitem{DGP18}
A.~S. Dalalyan, E.~Grappin, and Q.~Paris.
\newblock {On the Exponentially Weighted Aggregate with the Laplace Prior}.
\newblock {\em Ann. Stat.},
  46(5):  2452--2478, 2018.

\bibitem{Fra97}
M.~Fradelizi.
\newblock Sections of convex bodies through their centroid.
\newblock {\em Arch. Math. (Basel)}, 69(6):515--522, 1997.

\bibitem{FGP14}
M.~Fradelizi, O.~Gu{\'e}don, and A.~Pajor.
\newblock Thin-shell concentration for convex measures.
\newblock {\em Studia Math.}, 223(2):123--148, 2014.


\bibitem{FMMZ16}
M. Fradelizi, M. Madiman,  A. Marsiglietti, and A. Zvavitch.
Do Minkowski averages get progressively more convex?
\emph{C. R. Acad. Sci. Paris S\'er. I Math.}, 354, no. 2, 185--189, 2016.


\bibitem{FMW16}
M.~Fradelizi, M.~Madiman, and L.~Wang.
\newblock Optimal concentration of information content for log-concave
  densities. In: Houdr\'{e} C., Mason D., Reynaud-Bouret P., Rosi\'{n}ski J. (eds) , \emph{High Dimensional Probability VII}, pp. 45--60,
 Progr. Probab., \textbf{71}, Birkh\"auser, Cham, 2016. 
  
\bibitem{FMMZ18}
M. Fradelizi, M. Madiman,  A. Marsiglietti, and A. Zvavitch.
The convexification effect of Minkowski summation.
\emph{EMS Surveys in Mathematical Sciences}, 5, no. 1-2, 1--64, 2019.
  
  
\bibitem{GNT14}
O.~Gu{\'e}don, P.~Nayar, and T.~Tkocz.
\newblock Concentration inequalities and geometry of convex bodies.
\newblock In {\em Analytical and Probabilistic Methods in the Geometry of
  Convex Bodies}, volume~2 of {\em IM PAN Lecture Notes}, pages 9--86. Polish
  Acad. Sci., Warsaw, 2014.

\bibitem{GZ98}
R. J. Gardner and G. Zhang.
\newblock Affine inequalities and radial mean bodies.
\newblock {\em Amer. J. Math.}, 120:505-528, 1998


\bibitem{KM05}
B.~Klartag and V.~D. Milman.
\newblock Geometry of log-concave functions and measures.
\newblock {\em Geom. Dedicata.}, 112:169--182, 2005.

\bibitem{KPY08}
A.~Koldobsky, A.~Pajor, and V.~Yaskin.
\newblock Inequalities of the {K}ahane-{K}hinchin type and sections of
  {$L_p$}-balls.
\newblock {\em Studia Math.}, 184(3):217--231, 2008.

\bibitem{Kos17}
V.~Kostina.
\newblock Data compression with low distortion and finite blocklength.
\newblock {\em IEEE Trans. Inform. Theory.}, 63(7):4268--4285, July 2017.

\bibitem{LG17}
C.~T. Li and A.~El~Gamal.
\newblock Distributed simulation of continuous random variables.
\newblock {\em IEEE Trans. Inform. Theory.}, 63(10):6329--6343, October 2017.


\bibitem{LMNPT18}
M. Lotz, M. B. McCoy, I. Nourdin, G. Peccati, and J. A. Tropp. 
\newblock Concentration of the intrinsic volumes of a convex body. 
\newblock Available at {\tt arXiv:1810.12412}


\bibitem{LFM16:isit}
J.~Li, M.~Fradelizi, and M.~Madiman.
\newblock Information concentration for convex measures.
\newblock In {\em Proc. IEEE Intl. Symp. Inform. Theory}, pp. 1128--1132, Barcelona, Spain,
  July 2016.

\bibitem{LS93}
L.~Lov{\'a}sz and M.~Simonovits.
\newblock Random walks in a convex body and an improved volume algorithm.
\newblock {\em Random Structures Algorithms.}, 4, 359--412, 1993.


\bibitem{Mad20}
Madiman, M.:  A primer on entropic limit theorems.
\newblock {\em Preprint}, 2020.


\bibitem{MK18}
M. Madiman and I. Kontoyiannis. 
\newblock Entropy bounds on abelian groups and the Ruzsa divergence. 
\newblock {\emph {IEEE Trans. Inform. Theory}}, 64(1):77-92, January 2018.

\bibitem{MMX17:0}
M.~Madiman, J.~Melbourne, and P.~Xu.
\newblock Forward and reverse entropy power inequalities in convex geometry.
\newblock In E.~Carlen, M.~Madiman, and E.~M. Werner, editors, {\em Convexity
  and Concentration}, volume 161 of {\em IMA Volumes in Mathematics and its
  Applications}, pages 427--485. Springer, 2017.
\newblock Available at {\tt arXiv:1604.04225}.

\bibitem{MNT18}
M. Madiman, P. Nayar, and T. Tkocz. 
\newblock Sharp moment-entropy inequalities and capacity
bounds for log-concave distributions.
\newblock {\emph {Preprint}}. Available at {\tt arXiv:1811.00345}

\bibitem{MW17:witmse}
M.~Madiman and L.~Wang.
\newblock Physical applications of the concentration of information.
\newblock In {\em Proceedings of the Tenth Workshop on Information Theoretic
  Methods in Science and Engineering (WITMSE 2017)}, Paris, France, September
  2017.

\bibitem{MW19}
M.~Madiman and L.~Wang.
\newblock Weyl-convex potentials rule out first-order phase transitions.
\newblock {\em In preparation}, 2019.

\bibitem{MM14}
E.~S. Meckes and M.~W. Meckes.
\newblock On the equivalence of modes of convergence for log-concave measures.
\newblock In {\em Geometric aspects of functional analysis}, volume 2116 of
  {\em Lecture Notes in Math.}, pages 385--394. Springer, Cham, 2014.

\bibitem{MKC15:isit}
D.~Mesa, S.~Kim, and T.~Coleman.
\newblock A scalable framework to transform samples from one continuous
  distribution to another.
\newblock In {\em Proc. IEEE Intl. Symp. Inform. Theory}, pp.676--680, Hong Kong, China,
  June 2015.
  
\bibitem{MP89}
V.~D.~Milman and A.~Pajor.
\newblock Isotropic position and inertia ellipsoids and zonoids of the unit ball of a normed $n$-dimensional space. 
\newblock In {\em Geometric aspects of functional analysis}, volume 1376 of
  {\em Lecture Notes in Math.}, pages 64–104, Springer, Berlin, 1989. 

\bibitem{Ngu13:phd}
V.~H. Nguyen.
\newblock {\em {In\'egalit\'es fonctionnelles et convexit\'e}}.
\newblock PhD thesis, Universit\'e Pierre et Marie Curie (Paris VI), October
  2013.

\bibitem{Ngu14:1}
V.~H. Nguyen.
\newblock Dimensional variance inequalities of {B}rascamp-{L}ieb type and a
  local approach to dimensional {P}r\'ekopa's theorem.
\newblock {\em J. Funct. Anal.}, 266(2):931--955, 2014.


\bibitem{Wan14:phd}
L.~Wang.
\newblock {\em Heat capacity bound, energy fluctuations and convexity}.
\newblock PhD thesis, Yale University, May 2014.

\end{thebibliography}

\end{document}